\begin{document}
\newtheorem{lemma}{Lemma}[section]
\newtheorem{theorem}[lemma]{Theorem}
\newtheorem{corollary}[lemma]{Corollary}
\newtheorem{proposition}[lemma]{Proposition}
\theoremstyle{definition}
\newtheorem{definition}[lemma]{Definition}
\newtheorem{conjecture}[lemma]{Conjecture}
\newtheorem{question}[lemma]{Question}
\newtheorem{problem}[lemma]{Problem}
\newtheorem{claim}[lemma]{Claim}
\newtheorem{example}[lemma]{Example}
\newtheorem{remark}[lemma]{Remark}
\newtheorem{assumption}[lemma]{Assumption}
\newtheorem*{acknowledgements}{Acknowledgments}
\theoremstyle{remark}

\newcommand{\cf}{{\em cf.}\xspace }
\newcommand{\eg}{{\em e.g.},\xspace }
\newcommand{\ie}{{\em i.e.},\xspace }
\newcommand{\etal}{{\em et al.}\ }
\newcommand{\etc}{{\em etc.}\@\xspace}

 \newcommand{\R}{{\mathbb R}}
 \newcommand{\Z}{{\mathbb Z}}
 \newcommand{\T}{{\mathbb T}}
 \newcommand{\C}{{\mathbb C}}
 \newcommand{\Q}{{\mathbb Q}}
 \newcommand{\N}{{\mathbb N}}

 \newcommand{\alli}{{i=1, \ldots, t  }}
 \newcommand{\allj}{{j=0, \ldots, e_{i}-1}}

 
\title[Polynomial representability]{A faster algorithm for testing polynomial
  representability of functions over finite integer rings}  
\author[A. Guha]{Ashwin Guha}
\address{Department of Computer Science and Automation \\ Indian Institute of
Science \\ Bangalore 560012, India.}
\email{guha\_ashwin@csa.iisc.ernet.in}

\author[A. Dukkipati]{Ambedkar Dukkipati}
\address{Department of Computer Science and Automation \\ Indian Institute of
Science \\ Bangalore 560012, India.}
\email{ad@csa.iisc.ernet.in}

\begin{abstract}
Given a function from $\Z_n$  to itself one can determine its
polynomial representability by  using Kempner function.  
In this paper we present an alternative characterization of
polynomial functions over $\Z_n$ by constructing a generating set for
the $\Z_{n}$-module of polynomial functions. This characterization
results in an algorithm that is  faster on average in deciding
polynomial representability. We  also extend the characterization to
functions in several variables.     
\end{abstract}

\maketitle

\section{Introduction}
\label{Intro}
\noindent
In this paper we deal with the following question: 
given a function from a finite integer ring to itself does 
there exist a polynomial that evaluates to the  function?
In the case of real numbers $\R$, if the function is specified at only a finite
number of points it is possible to obtain a  
polynomial using Lagrange interpolation~\cite{lagrangereflexions}. For
analytic functions one may get an approximation using Taylor's
series. This problem has been well-studied over finite fields as
well. It was noted by Hermite \cite{hermite1863fonctions} that every
function over finite field of the form $\Z_p$, which is the set of
integers  modulo prime $p$, can be represented by a polynomial. This
result was extended by Dickson \cite{dickson1896analytic} to finite
fields $F_q$, where 
$q$ is a prime power. Moreover, it was also shown that there exists a
unique polynomial of degree less than $q$ that evaluates to the given
function. A thorough study of finite fields can be found in
\cite{lidl1997finite}. 

The property of polynomial representability does not hold
over finite commutative rings. In this paper we study the problem of
polynomial representability over finite integer rings
${\mathbb{Z}}_{n}$, which is the set of residue classes of $\Z$ modulo $n$. 

The earliest work in this direction was by Kempner
\cite{kempner1921polynomials}. It was proved that the only  
residue class rings over which all functions can be represented by polynomials 
are $\Z_p$, where $p$ is prime. Kempner
\cite{kempner1921polynomials} also introduced the function (sometimes
referred to as Smarandache function) defined as follows. 
\begin{definition}
\label{mufunction}
Kempner function $\mu: \N \longrightarrow \N$ is defined as $\mu(n)$ is the smallest positive integer such that $n \:|\: \mu(n)!$. 
\end{definition}
The Kempner function plays an important role in the study of polynomial
functions. In his work, Kempner showed that there exists a   
polynomial of degree less than $\mu(n)$ that evaluates to a function
over $\Z_n$, if the function is polynomially representable. An easy 
method to calculate $ \mu(n)$ is also given in
\cite{kempner1921polynomials}. One can show that when $n$ factors into primes as  $p_1^{e_1}p_2^{e_2} \ldots p_t^{e_t}$, 
then $\mu(n) = \max(\mu(p_i^{e_i})) $  is of the form $r \cdot p_k$ for some prime divisor 
$p_k$ of $n$ where $r$ is a positive integer less than or equal to $e_k$.  

It is obvious that the Kempner function is not monotonic: when $n$ is
prime $\mu(n)=n$, otherwise $\mu(n) < n$.  Kempner function has been
studied for its own merit and a discussion on the properties of this function is 
beyond the scope of this paper.  However, one may claim that as $n$ increases, $\mu(n)$
tends to be much smaller than $n$, by which one means that for most cases
$\mu(n)$ tends to be sub-logarithmic compared to $n$ \cite{luca2001average}. 
 
Polynomial representation in $\Z_n$ has since then been studied by
Carlitz \cite{carlitz1964functions}.  
The number of polynomial functions over $\Z_n$, when $n$ is a prime power, is given by Keller and Olson \cite{keller1968counting}. This was extended to arbitrary positive integer $n$ by Singmaster~\cite{singmaster1974polynomial}, where the Kempner function was used to give a  canonical representation for the polynomial functions.  Other notable results are given in \cite{mullen1984polynomial,brawley1992functions,chen1995polynomial,chen1996polynomial}.

Recently, the problem of polynomial representability of functions in
several variables has 
been studied by Hungerb\"uhler and Specker ~\cite{hungerbuhler2006generalization}. In this work, an elegant 
characterization of polynomial functions was given by generalizing the 
Kempner function to several variables.  The result makes use of
partial difference operator to determine whether  
a given function from $\Z_n^m$ to $\Z_n$ is polynomially
representable. This work does not provide a
computational complexity analysis but one can see that this method does
not lead to an efficient algorithm for verifying polynomial
representability of the functions. The characterization involves 
repeated computation of the  difference operator leading to an
algorithm whose time complexity is very large. In terms of computation, its   
performance is comparable to the intuitive method of checking for
existence of scalars $c_0, \ldots, c_{\mu(n)-1} \in \Z_n$ such that
the polynomial  
$\sum_{i=0}^{\mu(n)-1} c_i X^i$ evaluates to the given function. For
instance, in  the case of single variable, the computation of
$\Delta^k g(0) $ requires $O(k)$ operations for each $0 \leq k \leq
n$, hence checking polynomial representability may require $O(n^2)$
operations.  

In this paper we present a new characterization by adopting an
entirely new approach that gives rise to a faster algorithm. For this, we
generalize a characterization of polynomial functions over $\Z_{p^e}$
that is proposed in~\cite{guhaalgorithmic}.

\subsection*{Contributions}
In this paper we give an alternative characterization of polynomial functions over $\Z_n$. 
The new characterization is based on the fact that the set of polynomial functions forms a $\Z_n$-submodule of the $\Z_n$-module 
of all functions from $\Z_n$ to itself. We describe a `special' generating set for this $\Z_n$-module of polynomial functions. 
When $n$ is prime this generating set forms the standard basis for the vector space of polynomial functions.
We present a new algorithm based on this characterization and show that this is faster on average in deciding the polynomial 
representability of functions. We also extend the characterization to
functions in several variables and present a analysis of the algorithm
in this case. 

\subsection*{Organization}
The paper is organized as follows. Section \ref{Bkgd} contains the notation and necessary basic lemmas. The main theorem and
the characterization are given in Section \ref{Charac}. An algorithm based on the result is given in Section \ref{MyAlgo}.
In Section \ref{Algocomplexity} we discuss the complexity of the algorithm and compare its performance against an algorithm 
that makes use of canonical set of generators.  
The result is extended to functions in several variables in Section \ref{Multivar}. Section \ref{coda} contains the concluding remarks.

\section{Background}
\label{Bkgd}
\noindent
Throughout this paper we use $n$ to denote a positive integer of the form $ n= p_1^{e_1}p_2^{e_2} \ldots p_t^{e_t}$, where 
$ p_1 < p_2 < \ldots < p_t $ are distinct primes. Kempner function is denoted by $\mu$ (Definition \ref{mufunction}). Since $n$ is fixed through out this paper, we abbreviate  $\mu(n)$ to $\mu$ in some formulae. 
In $\Z_n $, each element of the congruence class is represented by the least non-negative residue modulo $n$ and 
all computations are performed modulo $n$ unless explicitly mentioned  otherwise. Polynomials are of the form $c_0 + c_1 X + \ldots + c_r X^r$, where $X$ is the indeterminate and  coefficients are from $\Z_n$.

A function $f : \Z_n \longrightarrow \Z_n$ is represented as an $n$-tuple $(a_0, a_1, \ldots, a_{n-1})$, where the $i$\textsuperscript{th} component $a_i = f(i)$, for $i=0, \ldots, n-1$. Hence we denote the set of all functions by $\Z_{n}^{n}$.

Given $v= (a_0, a_1, \ldots, a_{n-1}) $, $v^{<k>}$ represents the $k$\textsuperscript{th} cyclic shift to the right, 
for $k=0, \ldots, n-1$. That is  
\begin{displaymath}
 v^{<k>} = (a_{n-k}, a_{n-k+1}, \ldots, a_{n-k-1}),
\end{displaymath}
and we assume that $v^{<0>} =v$. 
In other words $v^{<k>}(i) = v(i-k)$ for all  $k=0, \ldots, n-1$.

Given a set $\{ v_1, v_2, \ldots, v_r \} \subset \Z_n^n$, $\langle v_1, v_2, \ldots, v_r \rangle$ denotes the $\Z_n$-module generated by that set. 
$\langle\langle v_1, v_2, \ldots, v_r \rangle\rangle $ denotes the $\Z_n$-module
generated by $v_i$ with $i=1, \ldots, r $ along with their cyclic shifts, \ie 
$ \langle \langle v_1, v_2, \ldots, v_r \rangle \rangle = \{  \sum \alpha_{ij}v_{i}^{<j>}  \:|\: \alpha_{ij} \in \Z_n, i=1, \ldots, r $, $j=0 , \ldots, n-1    \} $. 
We say a function is polynomial if there exists some polynomial in $\Z_n[X]$ that evaluates to the given function. 

We now make a few simple observations that are easy to verify. 

\begin{lemma}
\label{cyclpolyn}
 Suppose $v \in \Z_n^n$ is a polynomial function. Then $v^{<k>}$ is also a polynomial function for all $k=0, \ldots, n-1$.
\end{lemma}
This is easy to see since if $f(X) \in \Z_n[X]$ evaluates to $v$, then $f(X-k)$ which is also a polynomial evaluates to $v^{<k>}$.

\begin{lemma}
 Suppose $u,v \in \Z_n^n$ are polynomial functions. Then $\alpha u + \beta v$ is also a polynomial function for all $ \alpha, \beta \in \Z_n$. In other words, the set of all polynomial functions forms a $\Z_n$-module.
\end{lemma}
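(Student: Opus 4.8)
The plan is to exhibit, for any $\Z_n$-linear combination of two polynomial functions, an explicit polynomial in $\Z_n[X]$ that evaluates to it, and then to read off both closure and the submodule property from this.

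First I would recall the ambient structure: $\Z_n^n$, the set of all functions $\Z_n \to \Z_n$, is a $\Z_n$-module under componentwise operations, $(u+v)(i) = u(i) + v(i)$ and $(\alpha u)(i) = \alpha\, u(i)$, all computed in $\Z_n$. Next, since $u$ and $v$ are polynomial functions, choose $f(X), g(X) \in \Z_n[X]$ with $f(i) = u(i)$ and $g(i) = v(i)$ for every $i = 0, \ldots, n-1$. I would then consider the polynomial $h(X) := \alpha f(X) + \beta g(X) \in \Z_n[X]$ and evaluate it pointwise: for each $i$ we get $h(i) = \alpha f(i) + \beta g(i) = \alpha u(i) + \beta v(i) = (\alpha u + \beta v)(i)$, so $h$ represents $\alpha u + \beta v$, which is therefore a polynomial function.

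This already shows the set of polynomial functions is closed under $\Z_n$-linear combinations; since it is nonempty (the zero polynomial represents the zero function), it is a $\Z_n$-submodule of $\Z_n^n$. An equivalent and perhaps cleaner way to phrase the conclusion is that the evaluation map $\Z_n[X] \to \Z_n^n$, $f \mapsto (f(0), f(1), \ldots, f(n-1))$, is a homomorphism of $\Z_n$-modules, and the set of polynomial functions is precisely its image, hence a submodule. There is essentially no obstacle in this argument; the only point worth stating carefully is the elementary fact that evaluation respects addition and scalar multiplication of polynomials. This is the additive counterpart of Lemma \ref{cyclpolyn}, which used precomposition with $X \mapsto X-k$ in place of linear combination.
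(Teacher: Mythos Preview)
Your proof is correct and follows exactly the paper's approach: choose polynomials $f(X), g(X)$ representing $u, v$ and observe that $\alpha f + \beta g$ represents $\alpha u + \beta v$. The additional remarks about the evaluation map being a $\Z_n$-module homomorphism are a nice repackaging but not a different argument.
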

This is also obvious since if $f(X)$ and $g(X) \in \Z_n[X]$ evaluate to $u$ and $v$ respectively then $\alpha f + \beta g$ is also polynomial that evaluates to $\alpha u + \beta v$.

\begin{lemma}
 Suppose $u,v \in \Z_n^n$ are polynomial functions. Then $ u \cdot v$ defined by componentwise multiplication, 
$$ (u \cdot v) (x) = u(x) \cdot v(x), \mbox{ for all } x \in \Z_n$$
is also a polynomial function.
\end{lemma}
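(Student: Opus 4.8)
The plan is simply to lift $u$ and $v$ to witnessing polynomials and take their product, exactly in the spirit of the two preceding lemmas. By hypothesis there exist $f(X), g(X) \in \Z_n[X]$ with $f(i) = u(i)$ and $g(i) = v(i)$ for all $i = 0, \ldots, n-1$. Put $h(X) = f(X)\,g(X) \in \Z_n[X]$. I claim $h$ evaluates to $u \cdot v$: for every $x \in \Z_n$,
\begin{displaymath}
 h(x) = f(x)\,g(x) = u(x)\,v(x) = (u \cdot v)(x),
\end{displaymath}
where the first equality is just the fact that evaluation at a fixed point $x$ is a ring homomorphism $\Z_n[X] \to \Z_n$ (in particular multiplicative), and the middle equality uses the defining property of $f$ and $g$. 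Hence $u \cdot v$ is a polynomial function.

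The only point that deserves a word is the multiplicativity of the evaluation map $\mathrm{ev}_x : p(X) \mapsto p(x)$ on $\Z_n[X]$; this is a direct consequence of the definition of polynomial multiplication together with commutativity, associativity and distributivity in $\Z_n$, and it causes no difficulty precisely because all coefficients and all evaluations already live in the same ring $\Z_n$, so no subtlety about reducing modulo $n$ enters. I therefore expect no genuine obstacle: the statement is the multiplicative counterpart of the immediately preceding (additive) lemma, and together they record that the polynomial functions form a $\Z_n$-subalgebra of $\Z_n^n$ under componentwise operations.
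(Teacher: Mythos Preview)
Your proof is correct and is exactly the argument the paper gives: it simply notes that if $f(X)$ and $g(X)$ are polynomials evaluating to $u$ and $v$, then $f(X)g(X)$ evaluates to $u\cdot v$. You have supplied more detail (spelling out that evaluation at $x$ is a ring homomorphism), but the approach is identical.
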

This is simply the assertion that if $f(X)$ and $g(X) $ are polynomials then $f(X)g(X)$ is also a polynomial. This lemma states that the polynomial functions form a $\Z_n$-algebra.  Our objective is to provide a set of generators that 
generate the set of polynomial functions as a $\Z_n$-module. In particular we look for a set $S$ such that 
$ \langle\langle v \:|\: v \in S \rangle\rangle$ is the set of polynomial functions.
\begin{definition}
 We call a set $S \subset \Z_n^n$, $\Z_n$-multiplicatively-closed if for all $u, v \in S$,  $ u \cdot v = \alpha w$ for some $\alpha \in \Z_n $ , $w \in S$, where $\alpha$ may be zero.
\end{definition}

This definition is similar to that of closure for any binary operation except that we allow the product to be a scalar multiple of an element in the set. Such a definition 
ensures that the module generated by such a set $S$ is equal to the algebra generated by $S$. We now state the first non-trivial yet simple lemma.

\begin{lemma}
\label{contlemma}
 Let $S \subset \Z_n^n$ be a $\Z_n$-multiplicatively-closed set. If the functions corresponding to $1$ and $X$ belong to the module generated by $S$, then every polynomial function belongs to the module generated by $S$.
\end{lemma}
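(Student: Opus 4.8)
The plan is to show that the module $M = \langle\langle v \mid v\in S\rangle\rangle$ contains every monomial function $X^k$ for $k = 0, 1, 2, \ldots$, since these generate all polynomial functions as a $\Z_n$-module (any polynomial $\sum c_i X^i$ is a $\Z_n$-linear combination of the $X^i$). The argument will be by induction on $k$. The base cases $k=0$ (the function $\mathbf{1}$) and $k=1$ (the function $X$) hold by hypothesis. For the inductive step, I want to produce $X^{k+1}$ from things already known to lie in $M$, and the natural tool is componentwise multiplication: $X^{k+1}$ is the componentwise product of $X^k$ and $X$.

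First I would record the key closure property: since $S$ is $\Z_n$-multiplicatively-closed, the module $M$ it generates is closed under componentwise multiplication. Indeed, for $u = \sum \alpha_{ij} v_i^{<j>}$ and $v = \sum \beta_{k\ell} v_k^{<\ell>}$ in $M$, bilinearity of the componentwise product reduces $u\cdot v$ to a $\Z_n$-linear combination of products $v_i^{<j>}\cdot v_k^{<\ell>}$; and a cyclic shift distributes over componentwise multiplication, $v_i^{<j>}\cdot v_k^{<\ell>} = (v_i \cdot v_k)^{<\max(j,\ell)>}$ — wait, this is not quite right, so let me instead note the cleaner fact that the componentwise product of shifts is a shift of a product only when the shifts agree. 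This is exactly the subtlety below; in general $v_i^{<j>}\cdot v_k^{<\ell>}$ need not be a shift of an element of $S$ when $j\neq \ell$.

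This is where the main obstacle lies, and I would resolve it as follows. Rather than trying to close $M$ under all componentwise products, I use that $M$ is defined to include \emph{all} cyclic shifts of its generators, and I prove directly that $M$ is a $\Z_n$-algebra by showing the componentwise product of any two generators-with-shifts lands back in $M$. The clean way: for $v_i, v_k \in S$ we have $v_i \cdot v_k = \alpha w$ with $w \in S$, hence $v_i^{<j>}\cdot v_k^{<j>} = \alpha w^{<j>} \in M$ for every $j$. To handle unequal shifts $v_i^{<j>}\cdot v_k^{<\ell>}$, observe that in the module $M$ we may first argue — using that $\mathbf 1 \in M$ and that $M$ is closed under the $\Z_n$-action and cyclic shifts — that it suffices to treat the $X^k$ directly: once $X^k \in M$ for all $k$, every polynomial function is in $M$, and conversely $M$ consists of polynomial functions (by Lemma \ref{cyclpolyn} and the module property), so in fact $M$ \emph{equals} the algebra of polynomial functions, which is closed under componentwise products by the preceding lemma. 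Concretely, the induction goes: assume $X^k \in M$; since $X \in M$ and the product of two polynomial functions is polynomial and lies in the algebra generated by $S$, which (because $S$ is $\Z_n$-multiplicatively-closed) coincides with $M$, we get $X^{k+1} = X^k \cdot X \in M$. Thus all monomials, and hence all polynomial functions, lie in $M$.

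The step I expect to require the most care is precisely the identification of the \emph{module} generated by a $\Z_n$-multiplicatively-closed set $S$ with the \emph{algebra} generated by $S$: one must check that taking $\Z_n$-linear combinations and then multiplying does not escape the module, which comes down to the defining property $u\cdot v = \alpha w$ pushed through bilinearity and through the interaction of cyclic shifts with componentwise multiplication. Everything else — the reduction to monomials and the induction — is routine once that closure is in hand.
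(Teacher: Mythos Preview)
You have misread the statement: the lemma is about $\langle S\rangle$, the $\Z_n$-module generated by $S$ (plain $\Z_n$-linear combinations of elements of $S$), not $\langle\langle S\rangle\rangle$. Cyclic shifts play no role here; they enter only later, when the lemma is \emph{applied} to a set $S$ that already contains the relevant shifts. Your ``main obstacle'' --- handling products $v_i^{<j>}\cdot v_k^{<\ell>}$ with $j\neq\ell$ --- is therefore an artifact of this misreading and does not arise at all. With the correct reading the argument is immediate: write $X=\sum_i a_i u_i$ with $u_i\in S$; then
\[
X^2=\Bigl(\sum_i a_i u_i\Bigr)\cdot\Bigl(\sum_j a_j u_j\Bigr)=\sum_{i,j} a_i a_j\,(u_i\cdot u_j),
\]
and by the $\Z_n$-multiplicatively-closed hypothesis each $u_i\cdot u_j=\alpha_{ij} w_{ij}$ with $w_{ij}\in S$, so $X^2\in\langle S\rangle$. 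Iterating gives all $X^k$, and hence all polynomial functions (indeed it suffices to reach $X^{\mu-1}$). This is exactly the paper's proof.

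Separately, your attempted workaround is circular. You write: ``$X^{k+1}=X^k\cdot X$ lies in the algebra generated by $S$, which (because $S$ is $\Z_n$-multiplicatively-closed) coincides with $M$''. But the coincidence of the module and the algebra is precisely the closure-under-products statement you are trying to establish; you acknowledge in your final paragraph that this is ``the step requiring the most care'' and then never carry it out. The actual content --- pushing the defining property $u\cdot v=\alpha w$ through bilinearity --- is the one-line computation above, and it needs no shifts.
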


\begin{proof}
 Let the functions induced by polynomials $1$ and $X$ belong to $\langle S \rangle$. This means the vectors $ (1,1, \ldots,1)$ and $(0,1,2, \ldots,n-1)$ lie in $\langle S \rangle$.
 It suffices to show that evaluations of $1, X, X^2, \ldots, X^{\mu-1}$ lie in $ \langle S \rangle $, since any polynomial function can be represented by a unique polynomial of degree less than $ \mu$.
Let  $X = \displaystyle\sum\limits_{i=1}^{k} a_iu_i,$ where $ a_i \in \Z_n, u_i \in S.  $

\begin{displaymath} 
 X^2 = X\cdot X = ( \displaystyle\sum\limits_{i=1}^{k} a_iu_i)\cdot (\displaystyle\sum\limits_{i=1}^{k}  a_iu_i) 
     = \displaystyle\sum\limits_{1 \leq i,j \leq k} a_i a_j u_i\cdot u_j = \displaystyle\sum\limits_{i=1}^{l} c_i  v_i,
\end{displaymath}
where $c_i \in \Z_n, v_i \in S$. Hence, function induced by $X^2$ belongs to $\langle S \rangle$. Similarly one can show that all exponents of $X$ lie in $\langle S \rangle$.
\end{proof}

\section{Characterization}
\label{Charac}
\noindent
Let $n= p_1^{e_1}p_2^{e_2} \ldots p_t^{e_t}$. Consider the functions $u_{p_i,j}: \Z_n \longrightarrow \Z_n$ defined as follows.
\footnote{In the function definition, $a$ is the canonical representative of a congruence class. 
$p_i \:|\: a $ means $p_i$ divides $a$ as integers.}
\begin{equation}
\label{upijs}
 u_{p_i,j}(a)=	\left\{
			\begin{array}{ll}
			     \frac{\displaystyle {n}}{\displaystyle{p_i^{e_i}}}a^j  & \mbox{ if } p_i \:|\: a, \\
			      0 & \mbox{ if } p_i \nmid a,
			\end{array}
\right.
\end{equation}
for all $ \alli$, $\allj$.
In vector notation, when $j \neq 0$,
$$ u_{p_i,j} = \frac{n}{p_i^{e_i}}(0,\ldots, (p_i)^j, \ldots, (2p_i)^j, \ldots, (n-p_i)^j, 0, \ldots, 0), $$
where entries for non-multiples of $p_i$ are zero. The cyclic shifts of $u_{p_i,j}$ are defined as

\begin{equation*}
 u_{p_i,j}^{<k>}(a)=	\left\{
			\begin{array}{ll}
			    \frac{\displaystyle {n}}{\displaystyle{p_i^{e_i}}}(a-k)^j  & \mbox{ if }  a \equiv k \; (\mbox{mod }p_i), \\
			      0 & \mbox{ otherwise, } 
			\end{array}
\right.
\end{equation*}
which corresponds to $u_{p_i,j} $ shifted by $k$ places to the right for $\alli, \allj $.
Cyclic shifts of the form $u_{p_i,j}^{<k>} $ when $k$ is a multiple of $ p_i $ can be written as a linear combination of 
elements in $\{ u_{p_i, \ell} \:|\: \ell =0,1, \ldots,j  \} $.
Hence we only need to consider the first $p_i$ shifts for each prime. 
We now show that $u_{p_i,j} $ along with their cyclic shifts form a generating set for the module of polynomial functions.

\begin{lemma}
\label{LemmaUi}
 $u_{p_i,j}$ defined in \eqref{upijs} is a polynomial function for  $\alli, \allj$.
\end{lemma}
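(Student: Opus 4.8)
The plan is to exhibit, for each pair $(p_i,j)$, an explicit polynomial in $\Z_n[X]$ whose induced function is exactly $u_{p_i,j}$, and then to verify this coordinatewise via the Chinese Remainder Theorem. Write $m = \tfrac{n}{p_i^{e_i}} = \prod_{k\neq i} p_k^{e_k}$, so that $\gcd(m,p_i)=1$ while $p_k^{e_k}\mid m$ for every $k\neq i$. Fix an exponent $\lambda$ that is simultaneously a multiple of $\phi(p_i^{e_i})$ and at least $e_i$ (for instance $\lambda = e_i\,\phi(p_i^{e_i})$), and set
\[
  g_{i,j}(X) \;:=\; m\, X^{j}\bigl(1 - X^{\lambda}\bigr)\;\in\;\Z_n[X].
\]

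I claim $g_{i,j}$ evaluates to $u_{p_i,j}$. To check this I would fix $a\in\Z_n$ and compare $g_{i,j}(a)$ with $u_{p_i,j}(a)$ modulo each prime power $p_k^{e_k}$. For $k\neq i$ both quantities vanish mod $p_k^{e_k}$: the left side because $p_k^{e_k}\mid m$, and the right side because $u_{p_i,j}(a)$ is either $0$ or $m\,a^{j}$, and in the latter case again $p_k^{e_k}\mid m$. For $k=i$ I would split on whether $p_i\mid a$: if $p_i\nmid a$ then $a$ is a unit modulo $p_i^{e_i}$, so $a^{\lambda}\equiv 1\ (\mathrm{mod}\ p_i^{e_i})$ by Euler's theorem (using $\phi(p_i^{e_i})\mid\lambda$), whence $g_{i,j}(a)\equiv 0\equiv u_{p_i,j}(a)$; if $p_i\mid a$ then $a^{\lambda}\equiv 0\ (\mathrm{mod}\ p_i^{e_i})$ because $\lambda\geq e_i$, whence $g_{i,j}(a)\equiv m\,a^{j}\equiv u_{p_i,j}(a)$. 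Reassembling via CRT gives $g_{i,j}(a)\equiv u_{p_i,j}(a)\pmod n$ for all $a$, so $u_{p_i,j}$ is a polynomial function. (It is worth noting an alternative: once the case $j=0$ is settled by the same computation applied to $m(1-X^{\lambda})$, the cases $j\geq 1$ follow formally, since coordinatewise $u_{p_i,j}$ is the product of $u_{p_i,j-1}$ with the identity function $X\mapsto X$, so an induction using the fact that polynomial functions are closed under coordinatewise multiplication finishes the proof.)

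The only point needing a moment's care is the choice of $\lambda$: it must be large enough ($\lambda\geq e_i$) to annihilate the $p_i$-part of $a^{\lambda}$ when $p_i\mid a$, yet a multiple of $\phi(p_i^{e_i})$ to force $a^{\lambda}\equiv 1$ when $p_i\nmid a$; any common multiple of these two constraints will do. Beyond that there is no genuine obstacle — the scalar factor $m=n/p_i^{e_i}$ automatically kills every prime-power component other than $p_i$, which is precisely what makes the coordinatewise comparison go through uniformly.
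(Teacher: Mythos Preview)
Your proof is correct and follows essentially the same approach as the paper: both exhibit the polynomial $\frac{n}{p_i^{e_i}}\,X^{j}(1-X^{\lambda})$ and verify it evaluates to $u_{p_i,j}$ via Euler's theorem, the only difference being that the paper takes the uniform choice $\lambda=\phi(n)$ (which already satisfies both of your conditions, since $\phi(p_i^{e_i})\mid\phi(n)$ and $\phi(n)\geq e_i$), whereas you allow a prime-dependent exponent. Your explicit CRT verification and the remark about the inductive alternative for $j\geq 1$ are nice additions but not essential departures.
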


\begin{proof}
 It suffices to provide a polynomial that evaluates to each of the functions. 
 For fixed $i \in \{ 1, \ldots t\}$ and  $ j \in \{0, \ldots, e_i-1 \}$ we give a polynomial that evaluates to 
 $u_{p_i,j}$. Consider the monomial $X ^{\phi(n)}$, where $\phi(n)$ is Euler's totient function. Since 
 $\phi(n) \geq e_i $ for $n > 1$, $a^{\phi(n)} \equiv 0$ (mod $p_i^{e_i}$) if $p_i \:| \: a$. If $p_i \nmid a$, $p_i$ 
 and $a$ are relatively prime and $ a^{\phi(n)} \equiv 1$ (mod $p_i^{e_i}$) by Euler's theorem. Hence for all $a \in \Z_n$ we have
 \[
 a^{\phi(n)} =
  \begin{cases}
   1 & (\mbox{mod } p_i^{e_i}) \quad \text{if } p_i \nmid a\\
   0 & (\mbox{mod } p_i^{e_i}) \quad \text{if } p_i\:|\: a.
  \end{cases}
\]

Then the polynomial $1-X^{\phi(n)} \equiv (n-1)X^{\phi(n)} + 1$ corresponds to function

\[
 (1-X^{\phi(n)})(a) =
  \begin{cases}
   1 & (\mbox{mod } p_i^{e_i}) \quad \text{if } p_i \:|\:a\\
   0 & (\mbox{mod } p_i^{e_i}) \quad \text{if } p_i \nmid a  
  \end{cases}
\]

and the polynomial $X^j(1-X^{\phi(n)})$ corresponds to the function
\[
 X^j(1-X^{\phi(n)})(a) =
  \begin{cases}
   a^j & (\mbox{mod } p_i^{e_i}) \quad \text{if } p_i \:|\:a \\
   0   & (\mbox{mod } p_i^{e_i}) \quad \text{if } p_i \nmid a 
  \end{cases}
\] for $ \allj$.

Since $\frac{\displaystyle n}{\displaystyle {p_i^{e_i}}}$ and $p_i^{e_i}$ are relatively prime we have 
\[
\frac{\displaystyle {n}}{\displaystyle{p_i^{e_i}}} X^j(1-X^{\phi(p^n)})(a) =
  \begin{cases}
  \frac{\displaystyle {n}}{\displaystyle{p_i^{e_i}}} a^j & \quad  \text{if } p_i \:|\:a \\
   0 & \quad \text{if } p_i \nmid a 
  \end{cases}
\]
which is the vector $u_{p_i,j}$ for $ \allj$.
\end{proof}

From Lemma \ref{cyclpolyn} it follows that the cyclic shifts $u_{p_i,j}^{<k>} $ are also polynomial functions 
for $k=0, \ldots, p_i-1$.

\begin{lemma}
 \{$u_{p_i,j}^{<k>} \:|\: \alli, \allj, k=0, \ldots, p_i-1 $\}  is $\Z_n$-multiplicatively-closed.
\end{lemma}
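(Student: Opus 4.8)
The plan is to pick two arbitrary generators $u=u_{p_i,j}^{<k>}$ and $v=u_{p_{i'},j'}^{<k'>}$ from the set and to compute their componentwise product $u\cdot v$ straight from the piecewise description of the cyclic shifts, splitting the argument according to whether $i=i'$ and whether $k=k'$. The point will be that in each case $u\cdot v$ turns out to be a $\Z_n$-scalar multiple of another generator, with the scalar allowed to be $0$.

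First I would dispose of the cases in which the product vanishes. If $i\neq i'$, then at each point $a$ the value of $u\cdot v$ is either $0$ or a multiple of $\frac{n}{p_i^{e_i}}\cdot\frac{n}{p_{i'}^{e_{i'}}}$; since $p_i^{e_i}p_{i'}^{e_{i'}}\mid n$, this scalar equals $n\cdot\frac{n}{p_i^{e_i}p_{i'}^{e_{i'}}}$, a multiple of $n$, so $u\cdot v=0$. If $i=i'$ but $k\neq k'$ (with $0\le k,k'<p_i$), then $u$ and $v$ are supported on different residue classes modulo $p_i$, so again $u\cdot v=0$. In either situation $u\cdot v=0\cdot w$ for any generator $w$.

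The substantive case is $i=i'$, $k=k'$. Here $u\cdot v$ is supported on $\{a:a\equiv k\pmod{p_i}\}$, and on that set its value at $a$ equals $\left(\frac{n}{p_i^{e_i}}\right)^{2}(a-k)^{j+j'}$. Since $p_i\mid a-k$ there, $(a-k)^{j+j'}$ is divisible by $p_i^{j+j'}$, and because $n/p_i^{e_i}$ is coprime to $p_i$ the integer $\left(\frac{n}{p_i^{e_i}}\right)^{2}p_i^{j+j'}$ is a multiple of $n$ exactly when $j+j'\ge e_i$; hence if $j+j'\ge e_i$ the product is identically $0$. If instead $j+j'\le e_i-1$, I would rewrite the value as $\frac{n}{p_i^{e_i}}\cdot\bigl(\frac{n}{p_i^{e_i}}(a-k)^{j+j'}\bigr)$ and recognise the parenthesised factor as $u_{p_i,j+j'}^{<k>}(a)$, giving $u\cdot v=\frac{n}{p_i^{e_i}}\,u_{p_i,j+j'}^{<k>}$ with $\frac{n}{p_i^{e_i}}\in\Z_n$ and $u_{p_i,j+j'}^{<k>}$ a member of the set (the bounds $0\le j+j'\le e_i-1$ and $0\le k\le p_i-1$ being exactly what is needed for this).

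Assembling the cases, every product $u\cdot v$ is of the form $\alpha w$ with $\alpha\in\Z_n$ and $w$ in the set, which is precisely the definition of $\Z_n$-multiplicatively-closed. I expect the only genuine content to be the divisibility bookkeeping when $j+j'\ge e_i$ --- one must check that the product vanishes modulo the \emph{whole} of $n$, not merely modulo $p_i^{e_i}$, which is exactly where the extra factor $p_i^{j+j'}$ coming from $p_i\mid a-k$ is essential --- together with the routine check that in the surviving case the exponent $j+j'$ still lies in the admissible range $\{0,\dots,e_i-1\}$.
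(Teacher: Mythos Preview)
Your proof is correct and follows essentially the same three-case split as the paper's own argument (different primes, same prime with different shifts, same prime with the same shift), arriving at the identical conclusion $u\cdot v=\frac{n}{p_i^{e_i}}\,u_{p_i,j+j'}^{<k>}$ in the nontrivial case. If anything, you are slightly more careful than the paper in verifying that the product genuinely vanishes modulo all of $n$ when $j+j'\ge e_i$; the paper simply asserts this in a one-line remark.
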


\begin{proof}
Case (i) : Consider $u_{p_i,j_1}^{<k_1>}$ and $u_{p_i,j_2}^{<k_2>}$ for a fixed $i$ where $k_1 \neq k_2$ and $j_1, j_2$ are arbitrary.
$$ (u_{p_i,j_1}^{<k_1>} \cdot u_{p_i,j_2}^{<k_2>})(a) = u_{p_i,j_1}^{<k_1>}(a)\cdot u_{p_i,j_2}^{<k_2>}(a) =0$$
since at least one of the two will be zero. \\
Case (ii) : For a fixed $i$ consider $u_{p_i,j_1}^{<k>}$ and $u_{p_i,j_2}^{<k>}$.
\begin{align*}
 (u_{p_i,j_1}^{<k>} \cdot u_{p_i,j_2}^{<k>})(a) &= u_{p_i,j_1}^{<k>}(a)\cdot u_{p_i,j_2}^{<k>}(a)\\
 &= \begin{cases}
  \frac{\displaystyle {n}}{\displaystyle{p_i^{e_i}}}(a-k)^{j_1} \cdot \frac{\displaystyle {n}}{\displaystyle{p_i^{e_i}}}(a-k)^{j_2} & \text{if } a \equiv k \; (\text{mod }p_i) \\
   0 & \mbox{ otherwise } 
  \end{cases}\\
 &= \begin{cases}    
\frac{\displaystyle n}{\displaystyle{p_i^{e_i}}}\frac{\displaystyle n}{\displaystyle{p_i^{e_i}}}(a-k)^{j_1 + j_2} &\text{ if } a \equiv k \; (\text{mod }p_i) \\
    0 & \text{ otherwise}
    \end{cases}\\
 &=\frac{\displaystyle n}{\displaystyle{p_i^{e_i}}}u_{p_i,j_1+j_2}^{<k>}.
 \end{align*}
Note that if $j_1 +j_2 \geq e_i$ then this corresponds to the zero function.\\
Case (iii) : Consider distinct $p_{i_1}$ and $p_{i_2}$ with arbitrary $j_1, j_2$. We need not consider cyclic shifts here 
since they are essentially same.
\begin{align*}
 (u_{p_{i_1},j_1} \cdot u_{p_{i_2},j_2})(a) &= u_{p_{i_1},j_1}(a)\cdot u_{p_{i_2},j_2}(a)\\
 &= \begin{cases}
   \frac{ \displaystyle n}{\displaystyle {p_{i_1}^{e_{i_1}}}}a^{j_1} 
   \cdot  \frac{\displaystyle n}{\displaystyle {p_{i_2}^{e_{i_2}}}}a^{j_2} & \text{if } p_{i_1}p_{i_2} \:|\:a  \\
   0 & \mbox{ otherwise. } 
  \end{cases}
 \end{align*}
But $n \:|\: \frac{ \displaystyle n}{ \displaystyle {p_{i_1}^{e_{i_1}}}} \frac{\displaystyle  n}{\displaystyle {p_{i_2}^{e_{i_2}}}}$, hence it is the zero function.
\end{proof}

Consider the sum of first $p_1$ shifts of $u_{p_1,0}$, $ \displaystyle\sum\limits_{k=0}^{p_1-1} u_{p_1,0}^{<k>}$.
This corresponds to the constant function $\frac{\displaystyle n}{\displaystyle p_1^{e_1}}(1,1, \ldots,1) $. One can similarly obtain the functions 
$\frac{\displaystyle n}{\displaystyle{p_i^{e_i}}}(1,1, \ldots,1) $ for $i=2, \ldots,t$. We know that as integers 
\begin{displaymath}
 \mbox{gcd}(\frac{n}{p_1^{e_1}},\frac{n}{p_2^{e_2}}, \ldots, \frac{n}{p_t^{e_t}} )=1.
\end{displaymath}
From Bezout's lemma there exist $a_1, a_2, \ldots, a_t \in \Z$ such that 
$$1 = a_1 \frac{n}{p_1^{e_1}} + a_2 \frac{n}{p_2^{e_2}} + \ldots + a_t \frac{n}{p_t^{e_t}}.$$
The above statement is true considering $\frac{\displaystyle n}{ \displaystyle p_i^{e_i}}, \alli$ as integers. 
 The statement is equally valid if we were to consider the corresponding equivalence classes modulo $n$.
Therefore we have:
\begin{displaymath}
(1,1,\ldots, 1)  = \displaystyle\sum\limits_{i=1}^{t} a_i\frac{\displaystyle {n}}{\displaystyle{p_i^{e_i}}}(1,1, \ldots,1)
=\displaystyle\sum\limits_{i=1}^{t} \displaystyle\sum\limits_{k=0}^{p_i-1} a_i u_{p_i,0}^{<k>}.
\end{displaymath}
This means the vector corresponding to the constant polynomial $1$ can be written as a linear combination of elements in 
$\{u_{p_i,0}^{<k>} \:|\: \alli, k=0, \ldots, p_i-1 \}$, \ie 
$ 1 \in \langle\langle u_{p_i,0}\: | \:\alli \rangle\rangle $. 
We will employ a similar method to show that vector corresponding to polynomial $X$, \ie $(0,1,2, \ldots,n-1)$ 
belongs to the module $ \langle \langle u_{p_i,j} \:|\: \alli, \allj   \rangle \rangle$.

\begin{lemma}
 Function induced by the polynomial $X$ can be written as a linear combination of elements in 
 $ \langle \langle u_{p_i,j} \:|\: \alli , \allj \rangle \rangle $.
 \end{lemma}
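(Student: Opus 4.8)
The plan is to mirror the argument just used for the constant polynomial $1$. There are two steps: first, show that for each prime $p_i$ the scaled vector $\frac{n}{p_i^{e_i}}(0,1,2,\ldots,n-1)$ lies in $\langle\langle u_{p_i,j} \mid \allj \rangle\rangle$; second, patch these together across $i$ by Bezout's lemma, exactly as was done for $(1,1,\ldots,1)$.

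For the first step, fix $i$ and write a generic $a\in\Z_n$ (as its least non-negative residue) as $a = p_i q + r$ with $0\le r\le p_i-1$, so that $r$ is the unique shift index in $\{0,\ldots,p_i-1\}$ with $a\equiv r \pmod{p_i}$. I would then evaluate two sums of shifts at $a$. The sum $\sum_{k=0}^{p_i-1} k\,u_{p_i,0}^{<k>}$ has only its $k=r$ term nonzero at $a$, contributing $r\cdot\frac{n}{p_i^{e_i}}$; the sum $\sum_{k=0}^{p_i-1} u_{p_i,1}^{<k>}$ likewise has only its $k=r$ term nonzero, contributing $\frac{n}{p_i^{e_i}}(a-r) = \frac{n}{p_i^{e_i-1}}q$. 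Adding, the two contributions recombine to $\frac{n}{p_i^{e_i}}(p_iq+r) = \frac{n}{p_i^{e_i}}a$, which gives
\[
 \frac{n}{p_i^{e_i}}(0,1,2,\ldots,n-1) \;=\; \sum_{k=0}^{p_i-1}\Bigl( u_{p_i,1}^{<k>} + k\,u_{p_i,0}^{<k>} \Bigr).
\]
When $e_i=1$ there is no generator $u_{p_i,1}$, but then $\frac{n}{p_i^{e_i-1}}q = nq\equiv 0$, so that term is simply absent and $\frac{n}{p_i}(0,1,\ldots,n-1)=\sum_{k=0}^{p_i-1} k\,u_{p_i,0}^{<k>}$ already lies in the module. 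In every case $\frac{n}{p_i^{e_i}}(0,1,\ldots,n-1)\in\langle\langle u_{p_i,j}\mid\allj\rangle\rangle$.

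For the second step, recall $\gcd\bigl(\frac{n}{p_1^{e_1}},\ldots,\frac{n}{p_t^{e_t}}\bigr)=1$, so Bezout's lemma provides integers $a_1,\ldots,a_t$ with $\sum_{i=1}^t a_i\frac{n}{p_i^{e_i}}=1$; viewing this relation in $\Z_n$ and applying it to the vector $(0,1,\ldots,n-1)$ yields
\[
 (0,1,2,\ldots,n-1) \;=\; \sum_{i=1}^{t} a_i\cdot\frac{n}{p_i^{e_i}}(0,1,2,\ldots,n-1) \;=\; \sum_{i=1}^{t}\sum_{k=0}^{p_i-1} a_i\Bigl( u_{p_i,1}^{<k>} + k\,u_{p_i,0}^{<k>} \Bigr),
\]
exhibiting the function induced by $X$ as an element of $\langle\langle u_{p_i,j}\mid\alli,\allj\rangle\rangle$. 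The only place requiring care is the bookkeeping in the first step: one must confirm that for each $a$ precisely one index $k\in\{0,\ldots,p_i-1\}$ contributes to either sum — namely $k=r=a\bmod p_i$ — and that at that index the value of $u_{p_i,1}^{<k>}$ is genuinely $\frac{n}{p_i^{e_i}}(a-r)$ with $a-r$ a nonnegative multiple of $p_i$, so that the rewriting as $\frac{n}{p_i^{e_i-1}}q$ is legitimate; this rests on working with least non-negative residues. Once this identity is in hand, the Bezout packaging is identical to the constant-function case already treated.
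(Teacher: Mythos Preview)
Your proposal is correct and follows essentially the same approach as the paper: both establish the identity
\[
\frac{n}{p_i^{e_i}}(0,1,\ldots,n-1)=\sum_{k=0}^{p_i-1}\bigl(u_{p_i,1}^{<k>}+k\,u_{p_i,0}^{<k>}\bigr)
\]
for each $i$ and then combine across primes via Bezout's lemma. Your componentwise verification and your explicit treatment of the boundary case $e_i=1$ (where $u_{p_i,1}$ is not among the generators) are in fact slightly more careful than the paper's own presentation.
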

\begin{proof}
For a fixed $i$ consider the evaluation of  $\frac{\displaystyle n}{ \displaystyle {p_{i}^{e_i}}}X$.
 \begin{align*}
  \frac{n}{p_{i}^{e_i}}X &= \frac{n}{p_{i}^{e_i}}(0,1,2, \ldots,n-1) \\
  &= \frac{n}{p_{i}^{e_i}}(0,0,0, \ldots, p_i, 0, \ldots, 0, 2p_i, \ldots, 3p_i, \ldots, 0)\\
  & \quad + \frac{n}{p_{i}^{e_i}}(0,1 , 0, \ldots, 0,p_i + 1, \ldots, 2p_i +1, \ldots, 3p_i +1, \ldots, n-p_i+1, \ldots, 0)\\
  & \quad + \frac{n}{p_{i}^{e_i}}(0,0 , 2, \ldots, 0,p_i + 2, \ldots, 2p_i +2, \ldots, 3p_i +2, \ldots, n-p_i+2, \ldots, 0)\\
  & \qquad \qquad \qquad \qquad \qquad \qquad \vdots\\
  & \quad + \frac{n}{p_{i}^{e_i}}(0,\ldots, 0,p_i-1, \ldots, 2p_i -1, \ldots, 3p_i-1, \ldots, n-1)\\
  & = u_{p_i,1}\\
  & \quad + u_{p_i,1}^{<1>} + 1u_{p_i,0}^{<1>}\\
  & \quad + u_{p_i,1}^{<2>} + 2u_{p_i,0}^{<2>}\\
  & \qquad \qquad \vdots\\
  & \quad + u_{p_i,1}^{<p_i-1>} + (p_i-1)u_{p_i,0}^{<p_i-1>}.
 \end{align*}
Since we know that monomial $ X$ can be represented as a linear combination of 
$\frac{\displaystyle n}{ \displaystyle {p_{i}^{e_i}}}X$, it effectively means 
the function evaluated by polynomial $X$ belongs to the module generated by $u_{p_i,0}^{<k>}$ and $u_{p_i,1}^{<k>}$.
\end{proof}

Now that we have functions of $1, X \in \langle\langle u_{p_i,j} \rangle\rangle$, Lemma \ref{contlemma} directly gives us the following theorem.
\begin{theorem}
\label{MainResult}
 $ \{ u_{p_i,j}^{<k>} \:|\: \alli, \allj, k =0, \ldots, p_i-1 \}$ generates the $\Z_n$-module of polynomial function 
 from $\Z_n$ to itself.
 \end{theorem}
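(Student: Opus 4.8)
The plan is to prove the two module inclusions separately. Write $S = \{u_{p_i,j}^{<k>} \:|\: \alli, \allj, k = 0, \ldots, p_i-1\}$ for the set appearing in the statement; the goal is to show that $\langle S\rangle$ coincides with the $\Z_n$-module of polynomial functions.

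\emph{Containment in the polynomial functions.} By Lemma~\ref{LemmaUi} each $u_{p_i,j}$ is a polynomial function, and by Lemma~\ref{cyclpolyn} so is every cyclic shift $u_{p_i,j}^{<k>}$. Since the polynomial functions form a $\Z_n$-submodule of $\Z_n^n$, every $\Z_n$-linear combination of elements of $S$ is again polynomial, so $\langle S\rangle$ is contained in the module of polynomial functions.

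\emph{Containment of every polynomial function.} For this direction I would apply Lemma~\ref{contlemma}, whose hypotheses are that $S$ is $\Z_n$-multiplicatively-closed and that the functions induced by $1$ and by $X$ lie in $\langle S\rangle$. The first hypothesis is exactly the lemma proved just above, where one should note that when $j_1+j_2 \geq e_i$ the product is the zero function, which is $0$ times any element of $S$ and hence still of the form $\alpha w$. For the second hypothesis, the computation preceding the theorem already writes $(1,1,\ldots,1)$ as $\sum_{i=1}^{t}\sum_{k=0}^{p_i-1} a_i\, u_{p_i,0}^{<k>}$, using Bezout's lemma for the integers $n/p_i^{e_i}$, whose gcd is $1$; and the following lemma writes the function induced by $X$ as a $\Z_n$-combination of the vectors $\frac{n}{p_i^{e_i}}X$, each of which expands into the $u_{p_i,0}^{<k>}$ and $u_{p_i,1}^{<k>}$ with $0 \leq k \leq p_i-1$. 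One point worth a sentence is why only the first $p_i$ shifts per prime are needed: expanding $(a-k)^j$ by the binomial theorem shows that $u_{p_i,j}^{<k>}$ with $p_i \mid k$ is a $\Z_n$-linear combination of $u_{p_i,0},\ldots,u_{p_i,j}$, so $\langle\langle u_{p_i,j} \:|\: \alli, \allj\rangle\rangle = \langle S\rangle$. With both hypotheses verified, Lemma~\ref{contlemma} yields that every polynomial function lies in $\langle S\rangle$.

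Combining the two containments gives $\langle S\rangle = $ the $\Z_n$-module of polynomial functions, which is the assertion of the theorem. I do not expect a genuine obstacle here: all the substantive work has already been absorbed into the earlier lemmas, and the only point requiring care is the modular bookkeeping on cyclic shifts described above, which is what lets us replace all the cyclic shifts $u_{p_i,j}^{<k>}$, $k = 0, \ldots, n-1$, by the shorter list with $k < p_i$ in the generating set.
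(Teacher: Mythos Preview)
Your proposal is correct and follows exactly the paper's approach: the theorem is derived from Lemma~\ref{contlemma} once the preceding lemmas have established that $S$ is $\Z_n$-multiplicatively-closed and that the functions induced by $1$ and $X$ lie in $\langle S\rangle$, with the reverse containment coming from Lemma~\ref{LemmaUi} and Lemma~\ref{cyclpolyn}. Your write-up is in fact more explicit than the paper's one-line justification, and your remark on reducing arbitrary shifts to $k<p_i$ (which works for any $k=k'+p_i\ell$ by the binomial expansion of $(a-k'-p_i\ell)^j$, not only for $p_i\mid k$) is precisely the observation the paper makes informally before Lemma~\ref{LemmaUi}.
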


Written explicitly the generators are as follows.

\begin{align*}
u_{p_1,0} &= \frac{n}{p_{1}^{e_1}}(1, 0, \ldots, 0, 1, \ldots, 1, \ldots,0)\\
u_{p_1,1} &= \frac{n}{p_{1}^{e_1}}(0, 0, \ldots, 0, p_1, \ldots, 2p_1, \ldots,0)\\
& \qquad \qquad \qquad \vdots \\
u_{p_1, e_1-1} &= \frac{n}{p_{1}^{e_1}}(0, 0, \ldots, 0, p_1^{e_1-1}, \ldots, (2p_1)^{e_1-1}, \ldots,0)\\
& \qquad \qquad \qquad \vdots \\
u_{p_t, e_t-1} &= \frac{n}{p_{t}^{e_t}}(0, 0, \ldots, 0, p_t^{e_t-1}, \ldots, (2p_t)^{e_t-1}, \ldots,0).
\end{align*}

When $n$ is prime, $\Z_n $ is a field and the set of polynomial functions form a vector space 
over this field. The standard basis of the vector space is precisely the cyclic shifts of 
$$u_{n,0}=(1,\underbrace{0,\ldots,0}_\text{$n$-1 times}) $$ 
as mentioned in Section \ref{Intro}.

For the case when $n$ is a prime power of the form $p^e$ the generators are precisely those given in \cite{guhaalgorithmic}.
\begin{eqnarray*}
u_{p,0}  & = & (1,\underbrace{0,\ldots,0}_\text{$p-1$ times},1,\underbrace{0,\ldots,0}_\text{$p-1$ times}, 1,\ldots,0)\\
u_{p,1}  & = & (0,\underbrace{0,\ldots,0}_\text{$p-1$ times},p,\underbrace{0,\ldots,0}_\text{$p-1$ times},2p,\ldots,0)\\
u_{p,2}  & = & (0,\underbrace{0,\ldots,0}_\text{$p-1$ times},p^2,\underbrace{0,\ldots,0}_\text{$p-1$ times},(2p)^2,\ldots,0)\\
&  & \qquad \qquad \qquad \vdots \\
u_{p,e-1} & = &(0,\underbrace{0,\ldots,0}_\text{$p-1$ times},p^{e-1},\underbrace{0,\ldots,0}_\text{$p-1$ times},(2p)^{e-1},\ldots,0).
\end{eqnarray*}

When $n=p_1p_2 \ldots p_t$ the generators are cyclic shifts of 
\begin{eqnarray*}
u_{p_1,0}  & = & \frac{n}{p_1}(1,\underbrace{0,\ldots,0}_\text{$p_1-1$ times},1,\underbrace{0,\ldots,0}_\text{$p_1-1$ times}, 1,\ldots,0)\\
u_{p_2,0}  & = & \frac{n}{p_2}(1,\underbrace{0,\ldots,0}_\text{$p_2-1$ times},1,\underbrace{0,\ldots,0}_\text{$p_2-1$ times}, 1,\ldots,0)\\
&  & \qquad \qquad \qquad  \vdots \\
u_{p_t,0} & = & \frac{n}{p_t}(1,\underbrace{0,\ldots,0}_\text{$p_t-1$ times}, 1,\underbrace{0,\ldots,0}_\text{$p_t-1$ times},1,\ldots,0).
\end{eqnarray*}

\begin{example}
\label{eg12}
 Consider $n=12 = 2^2 \cdot 3 $. The generators are 
\begin{align*} 
 u_{2,0} &= (3,0,3,0,3,0,3,0,3,0,3,0)\\
 u_{2,1} &= (0,0,6,0,0,0,6,0,0,0,6,0)\\
 u_{3,0} &= (4,0,0,4,0,0,4,0,0,4,0,0)
\end{align*}
and their cyclic shifts. 
\end{example}

\section{Proposed Algorithm}
\label{MyAlgo}
\noindent
We present an algorithm based on Theorem \ref{MainResult} in Algorithm \ref{modifiedalgo}.
Let $N$ be the number of generators given by 
Theorem \ref{MainResult}. For each prime $p_i$, there are $e_i$ generators and $p_i$ cyclic shifts for each of them, which gives
\begin{equation}
\label{noofgen}
 N=p_1e_1 + p_2e_2 + \ldots +p_te_t. 
\end{equation}
Let $u_0, u_1, \ldots, u_N $ be the generators and the input function 
$f: \Z_n \longrightarrow \Z_n$ be of the form $(b_0, b_1, \ldots, b_{n-1}) \in \Z_n^n$.
The key idea is to check if the given function is a linear combination of 
these generators. 
This is equivalent to checking if the following system of linear equations in variables $y_1, \ldots, y_{N} $ has a solution in $\Z_n$.
\begin{equation}
\label{fullmatrix}
 A \cdot 
\left(
\begin{array}{c}
 y_1 \\
 y_2 \\
  \vdots \\
 y_{N}
\end{array}
\right)
=
\left(
\begin{array}{c}
 b_0 \\
 b_1 \\
 \vdots \\
 b_{n-1}
\end{array}
\right)
\end{equation}
where $A$ is an $n \times N $ matrix whose columns are the vectors $u_1, u_2, \ldots, u_N$. 
Since $N < n$, we have an over-defined system of equations. It is efficient to truncate the $ n \times N$ matrix $A$ and 
solve for the first $N$ rows using Gaussian elimination to determine if a solution exists. Let $B$ be the $ N \times N$ matrix which 
consists of the first $N$ rows of $A$ in \eqref{fullmatrix}. We now solve for the smaller matrix 
\begin{equation}
\label{reducedmatrix}
 B \cdot 
\left(
\begin{array}{c}
 y_1 \\
 y_2 \\
  \vdots \\
 y_{N}
\end{array}
\right)
=
\left(
\begin{array}{c}
 b_0 \\
 b_1 \\
 \vdots \\
 b_{N-1}
\end{array}
\right)
\end{equation} 
which is equivalent to checking for the first $N$ components of $f$ and then verify if the solution holds for remaining 
components of $f$.

\begin{algorithm}
\caption{Determination of Polynomial Functions}
\begin{algorithmic}

\State \textbf{Input:} $f=(b_0,b_1,\ldots,b_{n-1})$, where $n= p_1^{e_1} \ldots p_t^{e_t}  $.

\State \For {$i=1, \ldots,  t$} \Comment Step 1
	    \For{$j=0, \ldots, p_i-1 $}
		\For{$\ell =1, \ldots, \frac{n}{p_i}-1 $}
		    \If {$b_j \not\equiv b_{j+ \ell p_i} \; (\mbox{mod } p_i)$}
			\State \textbf{Output:} $f$ is not polynomial.
			\State exit
		    \EndIf
		\EndFor
	    \EndFor
       \EndFor

\Comment Step 2
\If{$B
\left( 
\begin{array}{c}
y_1\\
\vdots\\
y_{N}            
\end{array}
\right) = 
\left( 
\begin{array}{c}
b_0\\
\vdots\\
b_{N-1}            
\end{array}
\right)$ has no solution} \Comment B as in ~\eqref{reducedmatrix}

	\State \textbf{Output:} $f$ is not polynomial.
	\State exit
\EndIf

\State Let $(d_1, d_2, \ldots, d_N) $ be the solution.

\For{$j=N , \ldots, n-1 $} 		\Comment Step 3 
  \If{$b_j \neq ( \displaystyle\sum\limits_{i=1}^{N} d_i u_i)(j)$}
	    \State \textbf{Output:} $f$ is not polynomial.
	    \State exit
      \Else
	    \State \textbf{Output:} $f$ is polynomial.
      \EndIf    

\EndFor
\end{algorithmic}
\label{modifiedalgo}
\end{algorithm}

\subsection*{Proof of correctness}

Step 1 of Algorithm \ref{modifiedalgo} checks for a necessary congruence condition every polynomial must satisfy. Step 1 checks if
\begin{equation}
\label{congr} 
g(a) \equiv g(a+ p_i \ell) \; (\mbox{mod }p_i)
\end{equation}
for all $p_i \: | \: n$ and $a , \ell \in \Z_n.$ This step is useful in identifying a significant fraction of non-polynomial functions.
Assuming that each entry in the $n$-tuple is arbitrary, for any prime factor $p_i$, number of functions that satisfy  
\eqref{congr} is $ n^{p_i} \left( \frac{n}{p_i} \right)^ {n-p_i}$. In other words the fraction of functions that satisfy the 
condition is $\frac{1}{p_i^{n-p_i}}$ for each $p_i$, where $\alli$. Since the number of polynomial functions is much smaller than 
the number of non-polynomial functions, for most input functions the algorithm terminates at this step. 

Step 2 computes the solution of \eqref{reducedmatrix}. One must bear in mind that all 
computations are performed modulo $n$ where division by $p_i$ is not defined for $ \alli$. This means that whenever we encounter 
a case where division by $p_i$ occurs it immediately implies that no solution exists in $\Z_n$, and the function $f$ 
is not polynomial. 

Suppose a solution exists, say, $(d_1, d_2, \ldots, d_N)$. Step 3 checks if the solution holds for all components, \ie if
$$f= d_1u_1 + d_2u_2 + \ldots d_Nu_N. $$

The following example will help us understand the correctness of the algorithm.
\begin{example}
 Consider $n=12$. Let $f =(0,1,4,9,4,1,0,1,4,9,4,1)$.
\end{example}
The generators are given in Example \ref{eg12}. 
Step 1 checks if 
$$f(x+2) \equiv f(x) \mbox{ (mod } 2),$$
$$f(x+3) \equiv f(x) \mbox{ (mod } 3),$$
which is satisfied by $f$. Step 2 checks if the following system has a solution.
\[
\left(
 \begin{tabular}{ccccccc}
 4  & 0 & 0 & 3 & 0 & 0 & 0 \\ 
 0  & 4 & 0 & 0 & 3 & 0 & 0 \\
 0  & 0 & 4 & 3 & 0 & 6 & 0 \\
 4  & 0 & 0 & 0 & 3 & 0 & 6 \\
 0  & 4 & 0 & 3 & 0 & 0 & 0 \\
 0  & 0 & 4 & 0 & 3 & 0 & 0 \\
 4  & 0 & 0 & 3 & 0 & 6 & 0 \\
\end{tabular}
\right) \cdot
\left(
\begin{array}{c}
 y_1 \\
 y_2 \\
 y_3 \\
 y_4 \\
 y_5 \\
 y_6 \\
 y_7
\end{array}
\right) = 
\left(
\begin{array}{c}
 0 \\
 1 \\
 4 \\
 9 \\
 4 \\
 1 \\
 0
\end{array}
\right)
\]
A solution exists, namely $(0,1,1,0,3,0,0)$. In step 3 we check if the solution satisfies for all components, which it does.
Hence $f$ is polynomial.

\section{Analysis of Algorithm}
\label{Algocomplexity}
\noindent
In this section we discuss the computational aspects of the algorithm. We study the complexity of Algorithm \ref{modifiedalgo} and 
compare the performance of the algorithm with 
one based on the canonical set of generators, which is the set of functions corresponding to the monomials 
$ \{ 1, X, X^2, \ldots, X^{ \mu -1}\}$, where $ \mu$ is as defined earlier. 

First let us consider the set of generators $ \{ 1, X, X^2, \ldots , X^{\mu -1}\}$. 
Let $\Lambda = \underset{1 \leq i \leq t} \max \{ p_i e_i\}$. 
We have $ \mu \leq \Lambda$, 
\ie $ \Lambda $ is an upper bound of $\mu $ for a given $n$, since $p_i^{e_i}$ divides $(p_i e_i)! $ for $\alli$ . 
In order to determine if the given function 
$f = (b_0, b_1, \ldots, b_{n-1})$ is polynomial we need to check if there exist scalars $c_0, c_1, \ldots, c_{\mu-1} \in \Z_n$ 
such that $c_0 + c_1 X + \ldots + c_{\mu -1}X^{\mu -1} $ evaluates to $f$.  This is equivalent to determining 
if the following system of linear equations similar to \eqref{fullmatrix} has a solution.
\begin{equation}
\label{canonicalmatrix} 
 \left(
 \begin{array}{ccccc}
 1 & 0 & 0    & \ldots & 0 \\
 1 & 1 & 1^2  & \ldots & 1^{\mu -1} \\
 1 & 2 & 2^2  & \ldots & 2^{\mu -1} \\
 \vdots & \vdots & \vdots & \ddots & \vdots \\
 1 & (n-1) & (n-1)^2 & \ldots & (n-1)^{\mu -1}
 \end{array}
 \right)
\left(
\begin{array}{c}
 y_0 \\
 y_1 \\
 y_2 \\
 \vdots \\
 y_{\mu-1}
\end{array}
\right)
=
\left(
\begin{array}{c}
 b_0 \\
 b_1 \\
 b_2 \\
 \vdots \\
 b_{n-1}
\end{array}
\right)
\end{equation}

It is simpler to consider the first $\mu$ rows like we do in Algorithm \ref{modifiedalgo}. 
This step involves $O(\mu ^3)$ operations. Suppose a solution exists, say, $(c_0, c_1, \ldots, c_{\mu-1}) $. We then check 
if $ \sum_{j=0}^{\mu -1} c_{j}i^j = b_i$, for $i= \mu, \ldots, n-1 $. Evaluating the polynomial 
$\sum_{j=0}^{\mu -1} c_{j}X^j $ at each $i$ requires $\mu$ multiplications. To evaluate the polynomial 
at $(n-\mu)$ points takes $O(n\mu) $ steps. Comparing the polynomial with $(b_0, b_1, \ldots, b_{n-1})$ requires one pass 
which is $n$ operations.

Hence the complexity is $ O(\mu^3) + O(n \mu) + O(n)$. For small values of $n$ the term $\mu ^3$ dominates. However, 
as $n$ increases $\mu \ll n$ and the effective complexity is $ O(n \mu)$. Depending on the factorization of $n$, at worst it can 
be $O(n \Lambda)$.

We claim that the new set of generators reduces the number of computations. 
Before we proceed to study the complexity of the algorithm based on new generators we must remember that the complexity does not depend solely on the 
size of the input $n$, but also on the factorization of $n$.

Our proposed algorithm contains steps identical to the method mentioned above. Let $N$ be the number of generators from 
\eqref{noofgen}. Observe that $\Lambda \leq N \leq \Lambda t$.

Step 1 requires a single pass for each prime factor $p_i$ of $n$ and takes $O(n t)$, 
since each traversal of the input takes $O(n)$ and there are $t$ such primes. This does not affect the overall complexity 
adversely. In practice, it makes the algorithm faster by identifying many non-polynomial functions early without resorting to 
matrix calculations.

Step 2 solves for \eqref{reducedmatrix}. We have an $N \times N$ matrix $B$ which is larger than the one considered in 
\eqref{canonicalmatrix} at most by factor of $t$. An important feature of matrix $B$ is that most of its entries are zero. 
We can show that each row of $A$ in \eqref{fullmatrix} contains at most $(e_1 + \ldots + e_t)$ non-zero entries. 
Hence, of the $nN$ entries of $A$ at most $n(e_1 + \ldots + e_t)$ are non-zero. Moreover, the non-zero values are distributed 
uniformly over $A$. In other words,
\begin{align*}
\mbox{Fraction of non-zero entries}
& < \frac{n(e_1 + e_2 + \ldots + e_t)}{nN} \\
& < \frac{e_1 + e_2 + \ldots + e_t}{p_1e_1 + p_2e_2 + \ldots + p_te_t},
\end{align*}
which is also the fraction of non-zero entries in matrix $B$. 
This step takes $O(N^3)$ to solve in the worst case. It is possible that the matrix may be solved faster if it is sufficiently sparse.  
Suppose $(d_1 , \ldots, d_N) $ is a solution. 

Step 3 checks if $\sum_{i=1}^{N}d_i u_i(j) = b_j$ for the remaining components $j= N , \ldots, n-1$. Since each row contains 
fewer non-zero entries this step takes at most $n(e_1 + e_2 + \ldots + e_t) $ multiplications compared to $n \mu$ earlier. 
We can determine if  
$ \sum_{i=1}^{N}d_i u_i = f $ in a single pass that takes $O(n)$. The total complexity of the new algorithm 
is $O(nt + N^3 + n(e_1 + \ldots + e_t)) $. Although the second term is comparable to $\Lambda^3 t^3 $, for large values of $n$ 
the algorithm with new generators requires fewer multiplications.

Step-by-step break up of the complexity is given below.
\begin{eqnarray*}
\mbox{T}(n) 
& = & O(nt) + O(N^3) + O(n(e_1 + \ldots + e_t))\\
& = & O(n (e_1 + \ldots + e_t))
\end{eqnarray*}

The total complexity varies in magnitude depending on the factorization of $n$. We list some special cases where the 
algorithm performs significantly better.\\
Case 1: $n=p$. Algorithm \ref{modifiedalgo} takes $O(n)$. Note that in this case $N=n$, hence Step 2 
takes $O(n^3)$. However, this computation is unnecessary, since we know a priori every function 
is polynomially representable. \\
Case 2: $n=p^e$. Algorithm \ref{modifiedalgo} takes $O(ne)$.\\
Case 3: $n=p_1p_2 \ldots p_t$. Algorithm \ref{modifiedalgo} takes $O(nt)$.

Some examples are given below. 
\begin{example}
Consider $n= 29 \cdot 37^3 \cdot 53$. Then 
$ \mu = 111$,  $N=193$, $ \sum e_i = 5$. 
This means the new algorithm solves for a matrix roughly twice as big. However, the number of multiplications in Step 3 of 
our algorithm is $5n$ compared to $111n$ that may be required with  canonical generators.
\end{example}
\begin{example}
 Consider $n=97 \cdot 101 .$ Then
$ \mu =101$ , $N =198$, $ \sum e_i = 2$. 
Once again the system of equations is larger, 
but the multiplications required is comparable to $2n$ instead of $101n$.
\end{example}
The above example represents the case where our algorithm performs much better, when the number of prime factors and the exponent of each prime is small. Note that the matrix required in Step 2 of Algorithm \ref{modifiedalgo} is highly sparse, containing only two entries per row. This drastically reduces the calculation to solve the system of equations. 
The new algorithm performs poorly in the case when the exponents are much larger compared to the prime factors, in particular when the exponent $e_i$ is much greater than $p_i^2$.
\begin{example}
Consider $ n= 2^{15} \cdot 3^{10} \cdot 5^6.$ Then 
$ \mu = 25$, $ N =90$, $\sum e_i = 31$ . 
The system of equations is larger yet the number of multiplications required turns out to be $31n$ rather than $25n$. Although in this case the number of multiplications is more, it still remains comparable to that using canonical generators.
\end{example}

One must note that in the above case $\mu$ was less than $ \Lambda$, whereas in the previous examples the equality held. 
It is reasonable to assume that this equality holds for most values of $n$. Indeed, for a fixed positive integer $n$ within a 
large enough upper bound, the possible values $e_i$ may take is much less compared to those of $p_i$. Hence the new 
characterization is more efficient to check for polynomial representability for most values of $n$. 

The next question that follows is determining the polynomial that evaluates to the given function. This is possible since we have the polynomials that correspond to the generators from Lemma \ref{LemmaUi} and the 
algorithm gives a suitable linear combination of generators. The polynomial thus obtained has a degree of $ \phi(n)$. It is possible to get a lower degree polynomial  by simply dividing it by $X(X-1)\ldots (X-\mu+1)$. The remainder is of degree less than $ \mu$ and evaluates to the same function. By similarly choosing suitable coefficients it is possible to arrive at the canonical representation mentioned in 
\cite{singmaster1974polynomial}.
\section{Polynomials in several variables}
\label{Multivar}
\noindent
The set of generators described so far can be extended to multivariate functions in a natural way. 
Consider the set of functions in $m$ variables $x_1, x_2, \ldots, x_m$ over $\Z_n$. The intuitive set of generators to represent the polynomial functions are the evaluations of the monomials $X_1^{\alpha_1} X_2^{\alpha _2} \cdots X_m^{\alpha_m}$, where $\alpha_i = 0, \ldots, \mu-1$, for $ i=1, \ldots, m$, leading to $\mu^m$ generators. 
We wish to give a set of generators similar to that given in Theorem \ref{MainResult} for 
the set of polynomial functions from $ \Z_n^m$ to $\Z_n$. 

\begin{proposition}
\label{resultformultivar}
 The module of polynomial functions in $m$-variables from $\Z_n^m$ to $\Z_n$ is generated by tensor product of vectors given for $\Z_n$ taken 
 $m$ at a time and their shifts, \ie
generators are given by $u_{p_{i_1},j_1} \otimes u_{p_{i_2},j_2} \otimes \ldots \otimes u_{p_{i_m},j_m} $, where
$$(u_{p_{i_1},j_1} \otimes \ldots \otimes u_{p_{i_m},j_m})(a_1, \ldots, a_m) = u_{p_{i_1},j_1}(a_1)\ldots u_{p_{i_m},j_m}(a_m). $$
 \end{proposition}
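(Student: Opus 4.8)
The plan is to lift the three-step proof of Theorem~\ref{MainResult} to $m$ variables, working one coordinate at a time through the tensor-product structure. Write $\mathbf{u}_{\bar{\imath},\bar{\jmath}} = u_{p_{i_1},j_1} \otimes \cdots \otimes u_{p_{i_m},j_m}$ for multi-indices $\bar{\imath}=(i_1,\dots,i_m)$ and $\bar{\jmath}=(j_1,\dots,j_m)$, and let $S_m$ be the set of all such tensors together with all their coordinatewise shifts $\mathbf{u}^{<\bar{k}>}$, $\bar{k}=(k_1,\dots,k_m)$. Since shifting the $\ell$-th coordinate of a tensor product equals tensoring with the shifted factor in the $\ell$-th slot, $S_m$ is precisely the set of tensor products $u_{p_{i_1},j_1}^{<k_1>} \otimes \cdots \otimes u_{p_{i_m},j_m}^{<k_m>}$ of single-variable generators and their shifts; let $\langle S_m\rangle$ denote the $\Z_n$-module it generates.

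First I would verify that every element of $S_m$ is a polynomial function: if $f_\ell(X)\in\Z_n[X]$ induces $u_{p_{i_\ell},j_\ell}^{<k_\ell>}$ (such $f_\ell$ exists by Lemma~\ref{LemmaUi} and Lemma~\ref{cyclpolyn}), then $f_1(X_1)\cdots f_m(X_m)$ induces the corresponding tensor product, so $\langle S_m\rangle$ is contained in the module of polynomial functions. Second, I would check that $S_m$ is $\Z_n$-multiplicatively-closed: for two elements of $S_m$ one has
\begin{displaymath}
(\mathbf{u}^{<\bar{k}>}\cdot \mathbf{v}^{<\bar{k}'>})(a_1,\dots,a_m) = \prod_{\ell=1}^{m} u_{p_{i_\ell},j_\ell}^{<k_\ell>}(a_\ell)\, v_{p_{i'_{\ell}},j'_{\ell}}^{<k'_{\ell}>}(a_\ell),
\end{displaymath}
and by the single-variable multiplicative-closure lemma each factor is a scalar multiple of a single generator (possibly zero); collecting the scalars shows the product is a scalar multiple of an element of $S_m$. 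Third, I would prove the $m$-variable analogue of Lemma~\ref{contlemma}: because the single-variable identity $X^{\mu}\equiv g(X)$ with $\deg g<\mu$ (Kempner's theorem) lets one reduce the degree of each variable below $\mu$ without changing the induced function, every polynomial function $\Z_n^m\to\Z_n$ is a $\Z_n$-linear combination of the monomial functions $X_1^{\alpha_1}\cdots X_m^{\alpha_m}$ with $0\le\alpha_\ell<\mu$; hence a multiplicatively-closed generating set containing the functions $1,X_1,\dots,X_m$ generates all of them, by exactly the product-expansion used in the proof of Lemma~\ref{contlemma}.

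It then suffices to show $1,X_1,\dots,X_m\in\langle S_m\rangle$. The constant function $1$ on $\Z_n^m$ equals $1\otimes\cdots\otimes1$; since the constant $1$ on $\Z_n$ lies in $\langle\langle u_{p_i,0}\rangle\rangle$ by Theorem~\ref{MainResult}, expanding each factor as a $\Z_n$-combination of the $u_{p_i,0}^{<k>}$ and distributing the tensor product (which is multilinear, and whose coordinate shifts land back in $S_m$) exhibits $1\otimes\cdots\otimes1$ as a combination of elements of $S_m$. Likewise $X_\ell = 1\otimes\cdots\otimes X\otimes\cdots\otimes1$ with $X$ in the $\ell$-th slot, and since both $1$ and $X$ lie in $\langle\langle u_{p_i,j}\rangle\rangle$ in one variable, the same distribution argument places $X_\ell$ in $\langle S_m\rangle$. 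Applying the multivariate form of Lemma~\ref{contlemma} gives the reverse inclusion, and the proposition follows.

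The main obstacle I anticipate is the bookkeeping of the last two paragraphs rather than any genuinely new algebra: one must be careful that the module ``generated by the tensor products and their shifts'' is closed under all multi-shifts $\bar{k}\in\Z_n^m$, and that tensoring a $\Z_n$-linear combination of shifted one-variable generators in each slot and then expanding by multilinearity yields only legitimate elements of $S_m$ with the correct multi-shifts. Once the tensor/shift formalism is set up cleanly, every step is inherited coordinatewise from the single-variable Theorem~\ref{MainResult}.
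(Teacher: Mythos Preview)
The paper states Proposition~\ref{resultformultivar} without proof; immediately after the statement it only remarks that tensors with mixed primes vanish and then proceeds to count generators and describe Algorithm~\ref{modifiedmultialgo}. So there is nothing to compare your argument against.

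Your proposal is the natural lift of the single-variable argument and is correct. Each of the four steps goes through as written: the tensors are polynomial functions because products $f_1(X_1)\cdots f_m(X_m)$ are polynomials; $S_m$ is $\Z_n$-multiplicatively-closed because the product factors coordinatewise and the single-variable lemma applies in each slot; and the multilinear expansion of $1\otimes\cdots\otimes 1$ and $1\otimes\cdots\otimes X\otimes\cdots\otimes 1$ lands in $\langle S_m\rangle$ exactly as you say. One small simplification: in your third step you do not need Kempner's bound $\mu$ at all. Once $S_m$ is $\Z_n$-multiplicatively-closed, $\langle S_m\rangle$ is a sub\-algebra of the function algebra, so if it contains $1,X_1,\dots,X_m$ it automatically contains every monomial $X_1^{\alpha_1}\cdots X_m^{\alpha_m}$ (for all $\alpha_\ell\ge 0$) and hence every polynomial function; the reduction of degrees below $\mu$ is redundant, though harmless. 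The bookkeeping concern you flag at the end is also fine: your own observation that a multi-shift of a tensor product equals the tensor product of the shifted factors shows that ``tensor products of shifted generators'' and ``shifts of tensor products of generators'' describe the same set $S_m$.
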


It must be noted that if $p_{i_1} \neq p_{i_2}$ (or any other pair), then the tensor product is simply zero. Effectively the generators are of the form 
$u_{p_{i},j_1} \otimes u_{p_{i},j_2} \otimes \ldots \otimes u_{p_{i},j_m}$, where $ \alli$.
For a fixed $p_i$ the number of generators, ignoring the shifts, is the number of solutions to the inequality $j_1 + j_2 + \ldots + j_m < e_i $ 
which is $\binom{m+ e_i -1}{m} $. 
For each of these tensors there are $p_i$ shifts along each of the $m$ dimensions. Hence the number of generators 
corresponding to each $p_i$ is $p_i^{m}\binom{m+e_i-1}{m}$. 
Summing up over all primes we get the total number of generators to be
$$N_m = p_1^m\binom{m+e_1 -1}{m}+ \ldots+ p_t^m\binom{m+e_t-1}{m}. $$ 
Note that when we substitute $m=1$, \ie the univariate case we get precisely 
$p_1e_1+ \ldots + p_te_t$ from \eqref{noofgen} mentioned in Section \ref{MyAlgo}.
\begin{example}
Consider the case of functions in two variables over $\Z_6$ . The generators are
\[
\begin{tabular}{l|cccccc}
         & 2 & 0 & 0 & 2 & 0 & 0 \\ \hline
 2       & 4 & 0 & 0 & 4 & 0 & 0 \\
 0       & 0 & 0 & 0 & 0 & 0 & 0 \\
 0       & 0 & 0 & 0 & 0 & 0 & 0 \\
 2       & 4 & 0 & 0 & 4 & 0 & 0 \\
 0       & 0 & 0 & 0 & 0 & 0 & 0 \\
 0       & 0 & 0 & 0 & 0 & 0 & 0 \\
\end{tabular}
\]

\[
\begin{tabular}{l|cccccc}
         & 3 & 0 & 3 & 0 & 3 & 0 \\ \hline
 3       & 3 & 0 & 3 & 0 & 3 & 0 \\
 0       & 0 & 0 & 0 & 0 & 0 & 0 \\
 3       & 3 & 0 & 3 & 0 & 3 & 0 \\
 0       & 0 & 0 & 0 & 0 & 0 & 0 \\
 3       & 3 & 0 & 3 & 0 & 3 & 0 \\
 0       & 0 & 0 & 0 & 0 & 0 & 0 \\
\end{tabular}
\]
Note that $(2,0,0,2,0,0) \otimes (3,0,3,0,3,0)$ is a zero-matrix.
\end{example}

One may proceed to give an algorithm similar to Algorithm \ref{modifiedalgo} for polynomials in several variables. 
Let $ f : \Z_n^m \longrightarrow \Z_n $ be a function in $m$ variables. We may now represent the function 
as an $n^m$-tuple $(b_0, b_1, \ldots, b_{n^m-1})$. Let $u_1, \ldots, u_{N_m}$ be the generators, 
each represented as an $n^m$-tuple. We check if the given function $f$ is a linear combination of the generators, 
leading to a system of linear equations in variables $y_1, \ldots, y_{N_m} $, similar 
to  \eqref{fullmatrix} in Section \ref{MyAlgo}.
\begin{equation}
\label{fullmatrixmultivar}
 A' \cdot 
\left(
\begin{array}{c}
 y_1 \\
 y_2 \\
  \vdots \\
 y_{N_m}
\end{array}
\right)
=
\left(
\begin{array}{c}
 b_0 \\
 b_1 \\
 \vdots \\
 b_{n^m-1}
\end{array}
\right)
\end{equation}
where $A'$ is an $n^m \times N_m$ matrix whose columns are the generators 
$u_1, \ldots, u_{N^m}$. We solve for a smaller $B'$ consisting of the first $N_m$ 
rows of $A'$ and then verify whether the solution holds for the remaining components.

\begin{equation}
\label{reducedmatrixmultivar}
 B' \cdot 
\left(
\begin{array}{c}
 y_1 \\
 y_2 \\
  \vdots \\
 y_{N_m}
\end{array}
\right)
=
\left(
\begin{array}{c}
 b_0 \\
 b_1 \\
 \vdots \\
 b_{N_m-1}
\end{array}
\right)
\end{equation} 

A sketch of algorithm is given in Algorithm \ref{modifiedmultialgo}.

\begin{algorithm}
\caption{Determination of Polynomial Functions in several variables}
\begin{algorithmic}
\State		\Comment Step 1
\If{$B'
\left( 
\begin{array}{c}
y_1\\
\vdots\\
y_{N_m}            
\end{array}
\right) = 
\left( 
\begin{array}{c}
b_0\\
\vdots\\
b_{N_m-1}            
\end{array}
\right)$ has no solution}

	\State \textbf{Output:} $f$ is not polynomial. \Comment $B'$ as in ~\eqref{reducedmatrixmultivar}
	\State exit
\EndIf

\State Let $(d_1, d_2, \ldots, d_{N_m}) $ be the solution.

\State 
\For{$j=N_m , \ldots, n^m-1 $} \Comment Step 2
  \If{$b_j \neq ( \displaystyle\sum\limits_{i=1}^{N_m} d_i u_i)(j)$}
	    \State \textbf{Output:} $f$ is not polynomial.
	    \State exit
      \Else
	    \State \textbf{Output:} $f$ is polynomial.
      \EndIf    

\EndFor
\end{algorithmic}
\label{modifiedmultialgo}
\end{algorithm}

As a preliminary step one can check if 
$$f(a_1, \ldots, a_{m}) \equiv f(a_1 + \ell_1 p_i,  \ldots, a_m + \ell_m p_i) \mbox{ mod }p_i,  $$
where $a_1, \ldots, a_m  \in \{ 0, 1, \ldots,p_i -1 \}$ and  
$\ell_1, \ldots, \ell_m  \in  \{ 1,  \ldots, \frac{n}{p_i} -1 \}$, for each $p_i$, 
as in Step 1 of Algorithm \ref{modifiedalgo}.

Note that in \eqref{fullmatrixmultivar} the number of rows in $A'$ is the size of input 
as in the case of single variable, but the matrix is much sparser. For each generator 
in $\{ u_j \:|\: j=1, \ldots, N_m\}$, if $u_j$ corresponds to some prime $p_i$ then 
the fraction of non-zero entries in $u_j$ is at most $\frac{1}{\displaystyle{p_i^m}}$. Hence 
each prime $p_i$ contributes 
$(\frac{\displaystyle n}{\displaystyle p_i})^m \binom{m+e_i-1}{m} $ non-zero entries to the matrix 
$A'$.
For the matrix $A'$ in 
\eqref{fullmatrixmultivar} 

\begin{align*}
\mbox{Fraction of non-zero entries}
& < \frac{n^m(\displaystyle\sum_{i=1}^{t}\binom{m+e_i-1}{m})}{n^m N_m} \\
& < \frac{\displaystyle\sum_{i=1}^{t}\binom{m+e_i-1}{m}}{ \displaystyle\sum_{i=1}^{t}p_i^{m}\binom{m+e_i-1}{m}}.
\end{align*}

The uniform distribution of non-zero entries ensures that matrix $B'$ in Step 1 of Algorithm \ref{modifiedmultialgo} also has same sparsity. Assuming the system of linear equations is solved using Gaussian elimination, Step 1 of Algorithm \ref{modifiedmultialgo} takes $O(N_m^3)$. 
Step 2 takes $O(n^m \displaystyle\sum_{i=1}^{t}\binom{m+e_i-1}{m})$ multiplications since 
each row of $A'$ (and $B'$) contains at most $\displaystyle\sum_{i=1}^{t}\binom{m+e_i-1}{m}$ non-zero entries. The total 
time complexity  of Algorithm \ref{modifiedmultialgo} is given by 
$$T(n,m) = O(N_m^3) + O(n^m \displaystyle\sum_{i=1}^{t}\binom{m+e_i-1}{m}).$$
For most $n$ and $m$, the values of $N_m$ and $\sum_{i=1}^{t}\binom{m+e_i-1}{m}$ 
are smaller than $n^m$ by several orders of magnitude. Hence the time complexity of the 
algorithm  for functions in several variables is much less compared to the methods that
result out of canonical generators or the characterization given in 
\cite{hungerbuhler2006generalization} .

\section{Conclusion} 
\label{coda} 
\noindent 
In this paper we have provided an alternate characterization of polynomial functions over  
$\Z_n$ that results in improved algorithms for deciding polynomial representability. 
This characterization makes use of module structure of the set of polynomial functions and 
can be used to give a polynomial that evaluates to the polynomially representable function. By this, we can also arrive at the canonical representation  mentioned in \cite{singmaster1974polynomial}.
In addition, the characterization is also extended to polynomial
functions in several variables.

\footnotesize{
\section*{Acknowledgments}
Authors would like to thank anonymous reviewers for bringing our
attention to the work of Hungerb\"uhler and Specker (2006) and
providing valuable suggestions that improved this paper immensely.

}
\end{document}